\newif\ifpdfAuthoring
\newcommand     {\ab}[1][\SL]   {{#1}^{{\rm ab}}}
\newcommand     {\ach}[1][\D]   {\SL[{#1}]^{\sharp}}
\newcommand     {\card}[1]      {\sym{card}(#1)}
\newcommand     {\D}            {{\mathfrak o}}
\newcommand     {\F}            {{\mathbb F}}
\newcommand     {\f}[1][A]      {f_{#1}}
\newcommand     {\gs}[2][A]     {\{#1\}_{#2}}
\newcommand     {\id}[1]        {\mathfrak #1}
\newcommand     {\isom}         {\cong}
\newcommand     {\mat}[4]       {\big[\begin{smallmatrix}#1&#2\\#3&#4\end{smallmatrix}\big]}
\newcommand     {\Q}            {{\mathbb Q}}
\newcommand     {\rem}[1]       {\relax}
\newcommand     {\s}[1][A]      {s({#1})}
\newcommand     {\SL}[1][\D]    {\sym{SL}(2,{#1})}
\newcommand     {\sym}[1]       {\operatorname{#1}}
\newcommand     {\tr}           {\sym{t}}
\newcommand     {\Z}            {{\mathbb Z}}
\theoremstyle{plain}
\newtheorem{Theorem}{Theorem}
\newtheorem{Proposition}{Proposition}
\newtheorem{Corollary}{Corollary}
\newtheorem{Lemma}{Lemma}
\theoremstyle{definition}
\title{%
  Linear characters of $\sym{SL}_2$ over Dedekind domains}
\author{%
  Hatice Boylan and Nils-Peter Skoruppa}
\address{%
  RWTH Aachen and Universit\"at Siegen\endgraf
  hatice.boylan@gmail.com\endgraf
  \null
  Universit\"at Siegen\endgraf
  nils.skoruppa@gmail.com}
\thanks{%
  Parts of this article were prepared during the participation
  of the authors in the {\em Arithmetic Statistics} program (2011) at
  the {\em Mathematical Sciences Research Institute, Berkeley}, which
  was supported by the National Science Foundation of USA (Award
  Number 0757627).}
\subjclass[2010]{%
  Primary 20H05, Secondary 11F41}
\begin{document}

\begin{abstract}
  For an important class of arithmetic Dedekind domains~$\D$ including
  the ring of integers of not totally complex number fields, we
  describe explicitly the group of linear characters of~$\SL$. For
  this, we determine, for arbitrary Dedekind domains~$\D$, the group
  of linear characters of~$\SL$ whose kernel is a congruence subgroup.
\end{abstract}

\maketitle

\section{Statement of results and discussion}
\label{sec:discussion}

It is well-known that the group of linear characters of $\SL[\Z]$ is
cyclic of order~$12$.  The literature contains various formulas for
the linear characters of $\SL[\Z]$. These are either produced as a
byproduct in the theory of modular forms as consequence of the
transformation law of the Dedekind $\eta$-function under
$\SL[\Z]$~\cite{Dedekind1}, \cite{Dedekind2}, or else show up in the
purely group theoretical calculation of character tables of the groups
$G_N:=\SL[\Z/N\Z]$\rem{~cite{Frobenius}, cite{???}}. These groups
contain nontrivial linear characters only for $N=2,3,4$.  For these
$N$, the groups of linear characters of $G_N$ are generated by the
characters~$\varepsilon_N$, respectively, which are given by the
formulas
\begin{equation}
  \label{eq:explicit-formulas}
  \begin{split}
    \varepsilon_2(A) &= e^{\frac{2\pi i}2 \left(\tr(A)+1\right)\,\gs2}
    ,
    \\
    \varepsilon_3(A) &= e^{\frac{2\pi i}3 \tr(A)\,\gs3} ,
    \\
    \varepsilon_4(A) &= \s \, e^{\frac{2\pi i}4
      \left(\tr(A)+1\right)\,\gs 4} .
  \end{split}
\end{equation}
Here $\tr(A)$ denotes the trace of $A$. Moreover, for $A=\mat *bc*$ in
$G_N$, we use $\gs N = 0$ if $A=\pm 1$ or $N=4$ and $A\equiv 1\bmod
2$, we use $\gs N = 1$ if $\tr(A)^2\not=4$, and otherwise we have $\gs
N=u$, where $u=c$ if $c$ is a unit, and $u=-b$ otherwise. Finally,
$\s=-1$ if $A$ can be written in the form $1+2\mat abc*$ such that
$a+b+c$ is odd, and $\s=1$ otherwise. It is easy to see that the
functions $\varepsilon_N$ are class functions (see the remark at the
end of Section~\ref{sec:SL2Z}), but it is more difficult to see that
they are indeed linear characters. We were not able to find any
reference which describes the characters of $G_N$ by simple formulas
similar to~\eqref{eq:explicit-formulas}. We refer to
Section~\ref{sec:SL2Z} for a deduction of these formulas. Note that
$\varepsilon_N$ has order $N$. The group of linear characters of
$\SL[\Z]$ is generated by the two characters which are obtained by
composing $\varepsilon_3$ and $ \varepsilon_4$ with the natural
reduction maps from $\SL[\Z]$ onto $G_3$ and of $G_4$, respectively.

Using the same procedure we can define characters for $\SL$ for an
arbitrary ring~$\D$.  Namely, if $\id l$ is an ideal such that $\D/\id
l \isom \Z/N\Z$ for some $N\in\{2,3,4\}$, we have available the
nontrivial character
\begin{equation*}
  \varepsilon_{\id
    l} := \varepsilon_N \circ \sym{red}_{\id l}
\end{equation*}
of $\SL$, where $\sym{red}_{\id l}$ is the natural map from $\SL$ to
$G_N$ which maps a matrix $A$ to the matrix which is obtained by
replacing each entry $x$ of $A$ by the residue class modulo $N$ of any
integer $y$ such that~$x\equiv y \bmod \id l$.

By the very construction, $\varepsilon_{\id l}$ is trivial on matrices
$A$ which are congruent to $1$ modulo $\id l$.  Recall that a subgroup
of $\SL$ is called {\em congruence subgroup} if it contains the kernel
$\Gamma(\id l)$ of the natural map $\SL\rightarrow \SL[\D/\id l]$ for
some nonzero ideal $\id l$.  For an integral domain~$\D$, the
intersection of two congruence subgroups is again a congruence
subgroup since the intersection of $\Gamma(\id l)$ and $\Gamma(\id m)$
contains $\Gamma(\id l \id m)$, and $\id l \id m$ is nonzero if $\id
l$ and $\id m$ are so. Accordingly, for an integral domain~$\D$, the
linear characters of $\SL$ whose kernel is a congruence subgroup form
a subgroup of the group of all linear characters of $\SL$, which we
shall denote by~$\ach$.

As we shall show in Section~\ref{sec:proof}, the construction in the
penultimate paragraph is, for Dedekind domains $\D$, the only way to
obtain linear characters whose kernel is a congruence
subgroup. However, there is another character, which we have to
consider.  The prime ideals of Dedekind domains~$\D$ which have
residue field $\F_2$\footnote{We use $\F_p$ for the finite field with
  $p$ elements.}  fall into two classes, namely, those prime ideals
$\id q$ such that $\id q$ divides $2$ but $\id q^2$ does not, and
those prime ideals $\id r$ such that $\id r^2$ divides $2$. In the
former case $\D/\id q^2 \isom \Z/4\Z$, whereas in the latter case
$\D/\id r^2 \isom R:= \F_2[t]/(t^2)$. Note that we can write any
element $A$ in $\SL[R]$ uniquely in the form $A=A_0(1+\alpha B)$,
where $A_0$ is in $\SL[\F_2]$ and $B$ an element of the additive group
of matrices over $\F_2$ whose traces equal zero, and where we use
$\alpha = t+(t^2)$. We define a linear character $\varepsilon_4'$ on
$\SL[R]$ by setting
\begin{equation}
  \label{eq:exceptionel-char}
  \varepsilon_4'(A)
  =(-1)^{a+b+c}
  \qquad(A=A_0\big(1+\alpha\mat abc*\big))
  .
\end{equation}
(For the proof that this defines indeed a character see
Section~\ref{sec:SL2Z}.) Again, if $\id l$ is an ideal of the ring
$\D$ such that $\D/{\id l}\isom R$ we set
\begin{equation*}
  \varepsilon_{\id l}'
  :=
  \varepsilon_4'\circ \sym{red}_{\id l}
  ,
\end{equation*}
where $\sym{red}_{\id l}$ denotes the natural map from $\SL$ to
$\SL[R]$ which is obtained by reducing the entries of a matrix over
$\D$ modulo~$\id l$ and applying the unique isomorphism from $\D/\id
l$ onto $R$ (the uniqueness of such an isomorphism follows form the
fact that $R$ has no nontrivial automorphisms).

\begin{Theorem}
  \label{thm:main}
  Let $\D$ be a Dedekind domain\footnote{In this note fields are not
    considered as Dedekind domains.}. Then the group $\ach$ of linear
  characters of $\SL$ whose kernel is a congruence subgroup equals the
  direct product
  \begin{equation}
    \label{eq:the-characters}
    \prod_{\id p} \langle\varepsilon_{\id p}\rangle
    \times \prod_{\id q\Vert2} \langle\varepsilon_{\id q^2}\rangle
    \times \prod_{\id r^2|2}
    \big(\langle\varepsilon_{\id r}\rangle \times \langle\varepsilon_{\id r^2}'\rangle\big)
    ,
  \end{equation}
  where $\id p$, $\id q$ and $\id r$ run through all prime ideals of
  $\D$ such that $\D/\id p=\F_3$, $\D/\id q=\F_2$, $\D/\id r=\F_2$,
  and such that $\id q^2$ does not divide $2$ and $\id r^2$
  divides~$2$.
\end{Theorem}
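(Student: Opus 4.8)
The plan is to reduce the determination of $\ach$ to a family of local computations, one per prime ideal, carried out with the help of the filtration of $\SL$ by congruence subgroups. For the reduction, note first that a character in $\ach$ has kernel containing $\Gamma(\id l)$ for some nonzero ideal $\id l$, hence factors through $\SL[\D/\id l]$; conversely $\D/\id l$ is a finite product of local rings, so $\SL[\D/\id l]$ is generated by elementary matrices, and these lift along $\SL\to\SL[\D/\id l]$, so the latter map is onto and every linear character of $\SL[\D/\id l]^{\mathrm{ab}}$ occurs this way. By the Chinese remainder theorem $\SL[\D/\id l]$ is the direct product of the groups $\SL[\D/\id p^e]$ over the prime powers $\id p^{e}\Vert\id l$, and the same holds for the abelianizations and their character groups; letting $\id l$ vary and collecting terms, one gets an internal direct sum
\begin{equation*}
  \ach = \bigoplus_{\id p} X_{\id p},
\end{equation*}
where $X_{\id p}$ is the group of characters of $\SL$ which factor through $\SL[\D/\id p^e]$ for some $e\ge1$. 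This $X_{\id p}$ is an increasing union of finite subgroups, since the maps $\SL[\D/\id p^{e+1}]\to\SL[\D/\id p^e]$ are onto; and the sum is direct, since a finite family of nontrivial characters supported at distinct primes descends to a nontrivial character of an external product. So it remains to compute each $X_{\id p}$, and the answer will depend only on the residue field $\kappa$ of $\id p$ and, when $\kappa=\F_2$, on the valuation of $2$ at $\id p$.

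Fix $\id p$ and put $G=\SL[\D/\id p^e]$, filtered by the congruence subgroups $K_i=\ker\bigl(G\to\SL[\D/\id p^i]\bigr)$. Choosing $\pi$ with $(\pi)+\id p^{i+1}=\id p$, the map $1+\pi^iB\mapsto B\bmod\id p$ identifies $K_i/K_{i+1}$ with the additive group $V$ of trace-zero $2\times2$ matrices over $\kappa$; the conjugation action of $G$ on $K_i/K_{i+1}$ factors through $\SL[\kappa]$ and is the adjoint action; and $[\,1+\pi^aB,\,1+\pi^bC\,]\equiv1+\pi^{a+b}[B,C]\pmod{\id p^{a+b+1}}$, with the Lie bracket of matrices on the right. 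Two sub-cases are settled at once. If $\kappa$ has at least four elements, choose a unit $u$ of $\D/\id p^e$ whose residue lies outside $\{0,1,-1\}$; then $\mat1{(u^2-1)x}01 = [\,\mat u00{u^{-1}},\,\mat1x01\,]$ exhibits every $\mat1x01$, and symmetrically every $\mat10x1$, as a commutator, so $G$ is perfect and $X_{\id p}=1$. If $\kappa=\F_3$, then $V$ (the adjoint module, which is $\mathrm{Sym}^2$ of the standard $\SL[\F_3]$-module) is irreducible and nontrivial, so every layer $K_i/K_{i+1}$ has trivial $G$-coinvariants; running down the filtration gives $K_1\subseteq[G,G]$, hence $\SL[\D/\id p^e]^{\mathrm{ab}}\isom\SL[\F_3]^{\mathrm{ab}}\isom\Z/3$ for all $e$, so $X_{\id p}=\langle\varepsilon_{\id p}\rangle$.

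The case $\kappa=\F_2$ is the heart of the matter, and here $V$ is not semisimple: in characteristic $2$ the identity matrix is trace-zero, giving a trivial submodule $\langle1\rangle$ with $V/\langle1\rangle$ the two-dimensional irreducible $\SL[\F_2]$-module, so the $\SL[\F_2]$-coinvariants of $V$ form a single line $\isom\F_2$. The key point is that this line does not accumulate along the filtration, because in characteristic $2$ the Lie bracket satisfies $[V,V]=\langle1\rangle$: by the commutator formula the image of $[K_1,K_{i-1}]$ in $K_i/K_{i+1}$ contains this coinvariant line for each $i\ge2$, while $[G,K_i]$ covers the complementary augmentation submodule, so together $K_i\subseteq[G,G]\,K_{i+1}$ for $i\ge2$; running down the filtration gives $K_2\subseteq[G,G]$. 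Hence $\SL[\D/\id p^e]^{\mathrm{ab}}\isom\SL[\D/\id p^2]^{\mathrm{ab}}$ for every $e\ge2$, a group of order at most $|\SL[\F_2]^{\mathrm{ab}}|=2$ times the order $2$ of the coinvariants of $V$, hence at most $4$. Now $\D/\id p^2\isom\Z/4$ when $\id p^2\nmid2$ and $\D/\id p^2\isom\F_2[t]/(t^2)$ when $\id p^2\mid2$. In the first case $\varepsilon_{\id p^2}$ has order $4$, so $X_{\id p}=\langle\varepsilon_{\id p^2}\rangle\isom\Z/4$; in the second, $\varepsilon_{\id p}$ and $\varepsilon_{\id p^2}'$ are characters of order $2$ which are independent — $\varepsilon_{\id p}$ is trivial on $K_1$, $\varepsilon_{\id p^2}'$ is not — so $X_{\id p}=\langle\varepsilon_{\id p}\rangle\times\langle\varepsilon_{\id p^2}'\rangle\isom(\Z/2)^2$. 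Plugging these values into $\ach=\bigoplus_{\id p}X_{\id p}$ yields exactly the product~\eqref{eq:the-characters}.

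The step I expect to be hardest is this last stabilization in the $\kappa=\F_2$ case, i.e. the isomorphism $\SL[\D/\id p^e]^{\mathrm{ab}}\isom\SL[\D/\id p^2]^{\mathrm{ab}}$. It relies on the structural fact that $\SL$ over these Artinian local rings is controlled by its congruence filtration, with associated-graded commutators computed by the adjoint Lie action, together with the characteristic-$2$ coincidence that $[V,V]$ equals precisely the coinvariant line — which is exactly what keeps $X_{\id p}$ finite instead of growing with $e$. One must also distinguish $\Z/4$ from $(\Z/2)^2$ via the explicit characters, and, in the reduction, allow Dedekind domains whose residue rings $\D/\id l$ need not be finite but only Artinian; the latter is harmless, as only the generation of $\SL$ of a local ring by elementary matrices is used.
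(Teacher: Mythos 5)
Your proof is correct, and it takes a genuinely different route from the paper's. Both arguments share the same reduction (surjectivity of $\SL\to\SL[\D/\id l]$, the Chinese remainder theorem to localize at one prime power, and the explicitly constructed characters $\varepsilon_{\id p}$, $\varepsilon_{\id q^2}$, $\varepsilon'_{\id r^2}$ for the lower bound), but the upper bound on $\ab[{\SL[\D/\id p^e]}]$ is obtained quite differently. The paper attaches to a character its \emph{annihilator} ideal $\id a=\id p^n$, shows via $T(bu^2)=E(u)T(b)E(1/u)$ that $(\D/\id a)^\ast$ has exponent $\le2$, classifies the possible rings $\D/\id a$, and bounds the abelianization through the surjection $\D/\id a\to\ab[{\SL[\D/\id p^e]}]$ coming from generation by elementary matrices and $\big(ST(1)\big)^3=S^2$; in the $\Z/8\Z$ case this only yields the bound $8$, and the authors fall back on the known structure of $\ab[{\SL[\Z]}]$. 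You instead run the congruence filtration $K_i$ with adjoint-module graded pieces $V$ and use coinvariants together with the characteristic-$2$ identity $[V,V]=\langle1\rangle$ to prove $K_1\subseteq[G,G]$ (residue field $\ne\F_2$) resp.\ $K_2\subseteq[G,G]$ (residue field $\F_2$), which stabilizes the abelianization uniformly in $e$ and disposes of $\Z/8\Z$ without special pleading. The cost is that your route needs the (standard, but here unproved) surjectivity of $K_i\twoheadrightarrow V$ and the irreducibility of the adjoint module over $\F_3$, where the paper's Lemmas are more elementary and purely ideal-theoretic. One wording point, not a gap: the coinvariants form a quotient of $V$, not a line inside it; what your argument actually uses, correctly, is that $\langle1\rangle+I_GV=V$ where $I_GV$ is the augmentation submodule generated by the elements $g\cdot v-v$.
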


The assumption of the theorem excludes fields. However, if~$\D$ is a
field then $\SL$ does anyway not possess nontrivial linear characters
unless $\D=\F_2$ or $\D=\F_3$ (see the remark at the end of
Section~\ref{sec:proof}).

For a Dedekind domain $\D$, the number of ideals $\id p$ such that
$\D/\id p=\F_p$, for a given prime number $p$, is always finite (since
every such ideal divides the ideal generated by all elements of the
form $a^p-a$, which is not zero if $\D$ is infinite). Thus, for a
Dedekind domain $\D$, the group $\ach$ is always finite.  This holds
not true for an arbitrary integral domain. The ring $\F_2^\Z$ of all
maps from $\Z$ to $\F_2$ with argumentwise addition and multiplication
provides a counterexample. Here, for any integer $n$, the map
$A\mapsto \varepsilon_{3\Z}\big(A(n)\big)$ yields an element
of~$\ach[{\F_2^\Z}]$.

There is another interesting consequence of
Theorem~\ref{thm:main}. Namely, if, for an arbitrary ring~$\D$, a
subgroup $\Gamma$ of $\SL$ contains the commutator subgroup of $\SL$
then, for every $A$ which is not in $\Gamma$ there exists a linear
character $\chi$ whose kernel is $\Gamma$ but such that
$\chi(A)\not=1$\footnote{Since~$\Gamma$ contains the commutator
  subgroup, it is normal.  Any maximal extension of a linear character
  $\psi$ on $\langle A\Gamma\rangle$ with $\psi(A\Gamma)\not=1$ to a
  subgroup of $G:=\SL/\Gamma$, whose existence is ensured by Zorn's
  lemma, has by a standard argument $G$ as its domain, and therefore
  provides such a character.}. If $\D$ is a Dedekind domain, and if
$\Gamma$ is a congruence subgroup, then $\chi$ equals one of the
characters in $\ach$. Accordingly, $A$ is not contained in the
intersection $B(\D)$ of the kernels of the linear characters in
$\ach$. In other words, $\Gamma$ contains $B(\D)$. But $B(\D)$ is of
finite index in $\SL$. Hence we have
\begin{Corollary}
  Let $\D$ be a Dedekind domain. The number of congruence subgroups of
  $\SL$ containing the commutator subgroup $K(\D)$ is finite.
\end{Corollary}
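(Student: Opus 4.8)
The plan is to obtain the Corollary from Theorem~\ref{thm:main} by the ``separation of characters'' argument indicated in the paragraph preceding the statement, carried out in three steps.

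\emph{Step 1 (finiteness of $B(\D)$).} First I would invoke Theorem~\ref{thm:main} together with the observation made right after it---that there are only finitely many prime ideals with residue field $\F_2$ or $\F_3$---to conclude that $\ach$ is a \emph{finite} group. Every $\chi\in\ach$ then has finite order, so its image in ${\mathbb C}^{\times}$ is finite and $\ker\chi$ has finite index in $\SL$; since each $\ker\chi$ is also normal (being the kernel of a homomorphism), the intersection $B(\D):=\bigcap_{\chi\in\ach}\ker\chi$ is a normal subgroup of finite index in $\SL$.

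\emph{Step 2 (every congruence subgroup $\Gamma$ with $\Gamma\supseteq K(\D)$ contains $B(\D)$).} Fix such a $\Gamma$; since it contains the commutator subgroup it is normal and $G:=\SL/\Gamma$ is abelian. It suffices to show that every $A\in\SL\setminus\Gamma$ lies outside $B(\D)$, and for this I would construct a character $\chi\in\ach$ with $\chi|_{\Gamma}=1$ and $\chi(A)\neq1$; then $B(\D)\subseteq\ker\chi$ while $A\notin\ker\chi$. To build $\chi$ I would use that the characters of an abelian group separate its points: take a nontrivial character of the cyclic group $\langle A\Gamma\rangle\subseteq G$ and extend it to $G$ by the Zorn's-lemma/injectivity argument recalled in the footnote before the statement, getting $\psi\colon G\to{\mathbb C}^{\times}$ with $\psi(A\Gamma)\neq1$; then pull $\psi$ back along $\SL\to G$. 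Finally, because $\Gamma$ is a congruence subgroup it contains $\Gamma(\id l)$ for some nonzero ideal $\id l$, so $\ker\chi\supseteq\Gamma\supseteq\Gamma(\id l)$, whence $\ker\chi$ is a congruence subgroup and $\chi\in\ach$, as needed.

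\emph{Step 3 (conclusion) and the main obstacle.} By Step~2 the assignment $\Gamma\mapsto\Gamma/B(\D)$ embeds the set of congruence subgroups of $\SL$ containing $K(\D)$ into the set of subgroups of the finite group $\SL/B(\D)$, which is finite; hence so is the set in question. The only point at which Theorem~\ref{thm:main} is genuinely used---and the only real obstacle---is Step~1, the finiteness of $\ach$; everything afterwards is formal group theory. That this finiteness is indispensable is shown by the ring $\F_2^{\Z}$ discussed after the theorem, for which the analogue of the Corollary fails. The one piece of routine care needed in Step~2 is to check that the pulled-back character really lies in $\ach$, which is precisely where one uses that a congruence subgroup contains some $\Gamma(\id l)$.
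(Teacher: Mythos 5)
Your proposal is correct and follows essentially the same route as the paper: finiteness of $\ach$ (from Theorem~\ref{thm:main} and the finiteness of the relevant primes) gives $B(\D)$ finite index, the Zorn's-lemma extension of a character separating $A$ from $\Gamma$ shows every congruence subgroup containing the commutator subgroup contains $B(\D)$, and the correspondence with subgroups of the finite quotient $\SL/B(\D)$ concludes. The only cosmetic difference is that you spell out explicitly why $B(\D)$ has finite index and why the pulled-back character lies in $\ach$, points the paper leaves implicit.
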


In particular, we see that the commutator subgroup of $\SL$, for any
Dedekind domain $\D$, possesses a congruence closure, namely the
subgroup $B(\D)$. This is surprising since the index of the commutator
subgroup in $\SL$ is not necessarily finite (see below).

If $K$ is a global field, i.e.~an algebraic number field or a function
field in one variable over a finite field, then, for every finite set
of places of $K$ comprising the archimedean ones, the ring $\D_S$ of
$S$-integers is a Dedekind domain. The ring $\D_S$ consists of all
elements of $K$ whose valuation at a place $v$ of $K$ is $\ge 0$
unless $v$ is in $S$. If $\card S\ge 2$ then the abelianization
$\ab[{\SL[\D_S]}]$ of $\SL[\D_S]$ is finite (\cite[Thm.~3,
Cor.]{Serre}).  Moreover, if at least one place of $S$ is real or
nonarchimedean then all subgroups of finite index in $\SL[\D_S]$ are
congruence subgroups (\cite[Thm.~2, Cor.~3]{Serre}). Hence, as a
consequence of Theorem~\ref{thm:main} we obtain
\begin{Theorem}
  \label{thm:main+Serre}
  Assume that $\D$ is the ring of $S$-integers of a global field,
  where $\card S\ge2$ and $S$ contains at least a real or a
  nonarchimedean place. Then the group of {\em all} linear characters
  of $\SL$ coincides with~$\ach$, i.e.~it equals the
  group~\eqref{eq:the-characters}.
\end{Theorem}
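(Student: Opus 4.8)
The plan is to show that, under the stated hypotheses, the kernel of \emph{every} linear character of $\SL$ is automatically a congruence subgroup. Granting this, the theorem follows at once: by definition $\ach$ is the group of those linear characters of $\SL$ whose kernel is a congruence subgroup, so the claim means precisely that $\ach$ exhausts the group of all linear characters of $\SL$, and Theorem~\ref{thm:main} then identifies this common group with the product~\eqref{eq:the-characters}.

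To carry this out, I would first observe that a linear character $\chi$ of $\SL$, being a homomorphism into the abelian group $\mathbb{C}^{\times}$, factors through the abelianization $\ab=\SL/K(\D)$, where $K(\D)$ denotes the commutator subgroup. Since $\card S\ge2$, the group $\ab$ is finite by \cite[Thm.~3, Cor.]{Serre}; hence the image of $\chi$ is a finite subgroup of $\mathbb{C}^{\times}$, so $\chi$ has finite order and $\ker\chi$ is of finite index in $\SL$. Next I would invoke the congruence subgroup property in its strong form: because $\card S\ge2$ and $S$ contains a real or a nonarchimedean place, every subgroup of finite index in $\SL[\D_S]$ is a congruence subgroup by \cite[Thm.~2, Cor.~3]{Serre}. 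Applying this to $\ker\chi$ shows that $\ker\chi$ is a congruence subgroup, so $\chi\in\ach$. As the opposite inclusion $\ach\subseteq\{\text{all linear characters of }\SL\}$ is trivial, the two groups coincide, and the explicit form follows from Theorem~\ref{thm:main}.

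There is essentially no obstacle once the two results from \cite{Serre} are available; the argument is just the concatenation of two reductions, from \emph{linear character} to \emph{finite-index kernel} via the finiteness of $\ab$, and from \emph{finite-index subgroup} to \emph{congruence subgroup} via the congruence subgroup property. The one step that genuinely uses the numerical hypothesis $\card S\ge2$ is the first reduction: without it the abelianization of $\SL$ may be infinite and a linear character need not have finite image, so this hypothesis, as well as the presence of a real or nonarchimedean place which is what makes the congruence subgroup property hold in the required strong form, cannot be omitted.
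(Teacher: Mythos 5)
Your proposal is correct and is essentially identical to the paper's own argument, which likewise combines the finiteness of $\ab[{\SL[\D_S]}]$ from \cite[Thm.~3, Cor.]{Serre} with the congruence subgroup property from \cite[Thm.~2, Cor.~3]{Serre} to conclude that every linear character has a congruence subgroup as kernel, and then invokes Theorem~\ref{thm:main}. No discrepancies to report.
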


Note, that the theorem implies, for function fields of one variable
over a finite field $\F$ and every finite set $S$ of at least two
places, that $\SL[\D_S]$ has no nontrivial character if the
characteristic of~$\F$ is different from~$2$ and~$3$.

The assumptions of Theorem~\ref{thm:main+Serre} apply, in particular,
to the ring of integers of a number field different from $\Q$ which is
not totally complex.
\begin{Corollary}
  \label{cor:maximal-order-case}
  Let $\D$ be the ring of integers of an algebraic number field with
  at least one real embedding. Then the group of {\em all} linear
  characters of $\SL$ coincides with the group $\ach$, i.e.~it equals
  the group~\eqref{eq:the-characters}.  In particular, the order of
  the abelianized group $\ab$ equals $3^a4^b$, where $a$ and $b$
  denote the number of prime ideals of degree~$1$ over $3$ and over
  $2$, respectively.
\end{Corollary}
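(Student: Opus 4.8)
The plan is to obtain the first assertion from Theorem~\ref{thm:main+Serre} and then to read off the order of the group~\eqref{eq:the-characters}.

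Write $K$ for the given number field and $S$ for its set of archimedean places; then $\D=\D_S$, since an element of $K$ that is integral at every finite place already lies in $\D$. By hypothesis $S$ contains a real place, and $\card S\ge2$ as soon as $K\not=\Q$: if $r_1$ and $r_2$ count the real places and the conjugate pairs of complex ones, then $r_1\ge1$ and $r_1+2r_2=[K:\Q]$, whence $r_1+r_2\ge2$ once $[K:\Q]>1$. Thus, for $K\not=\Q$, Theorem~\ref{thm:main+Serre} applies and shows that the group of all linear characters of $\SL$ equals $\ach$, and therefore equals~\eqref{eq:the-characters}; for $K=\Q$ this is instead the classical fact that the linear characters of $\SL[\Z]$ form a cyclic group of order $12$, together with the identification $\ach[\Z]=\langle\varepsilon_{3\Z}\rangle\times\langle\varepsilon_{4\Z}\rangle$, both recorded at the beginning of Section~\ref{sec:discussion}.

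To compute the order of~\eqref{eq:the-characters} I would first pin down the order of each generating character. Every $\varepsilon_{\id l}$ with $\D/\id l\isom\Z/N\Z$ is, by construction, trivial on $\Gamma(\id l)$, hence factors through $\SL/\Gamma(\id l)\isom\SL[\D/\id l]=G_N$ — here one uses that reduction modulo a nonzero ideal of a Dedekind domain is onto $\SL$ of the quotient — as the character $\varepsilon_N$, which has order $N$; similarly $\varepsilon_{\id r^2}'$ factors through $\SL[R]$ as $\varepsilon_4'$, which takes values in $\{\pm1\}$ and is nontrivial, hence has order $2$. So $\varepsilon_{\id p}$, $\varepsilon_{\id q^2}$, $\varepsilon_{\id r}$ and $\varepsilon_{\id r^2}'$ have orders $3$, $4$, $2$ and $2$, respectively. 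Now a prime $\id p$ with $\D/\id p=\F_3$ is precisely a degree-$1$ prime over $3$, and the $a$ such primes each give a cyclic factor of order $3$. A prime $\id p$ with $\D/\id p=\F_2$ is precisely a degree-$1$ prime over $2$; in $2\D$ it occurs either to the first power, in which case it is one of the $\id q$ and gives the factor $\langle\varepsilon_{\id q^2}\rangle$ of order $4$, or to a higher power, in which case it is one of the $\id r$ and gives $\langle\varepsilon_{\id r}\rangle\times\langle\varepsilon_{\id r^2}'\rangle$ of order $2\cdot2=4$; the two cases are mutually exclusive and cover all $b$ such primes. Since by Theorem~\ref{thm:main} the product in~\eqref{eq:the-characters} is direct, its order is $3^a4^b$.

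Finally, the group of all linear characters of $\SL$ is canonically the character group $\sym{Hom}(\ab,\mathbb{C}^\times)$ of the abelianization $\ab$, which, as recalled before Theorem~\ref{thm:main+Serre}, is finite under the present hypotheses; since a finite abelian group and its character group have the same order, $|\ab|$ equals the order of the group of all linear characters of $\SL$, which by the first part is $3^a4^b$. The whole argument is little more than bookkeeping once Theorem~\ref{thm:main+Serre} is granted; the two points that deserve a moment's attention are the fact that both types of degree-$1$ prime over $2$ contribute a group of order exactly $4$ — so in particular that $\varepsilon_4'$ is nontrivial and is not a power of $\varepsilon_{\id r}$, which is part of Theorem~\ref{thm:main} — and the observation that $K=\Q$ must be treated by the classical computation rather than by Theorem~\ref{thm:main+Serre}.
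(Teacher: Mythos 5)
Your proof is correct and follows essentially the same route as the paper, which derives the corollary directly from Theorem~\ref{thm:main+Serre} and then reads off the order of the group~\eqref{eq:the-characters} as $3^a4^b$ by counting the orders of the generators. Your explicit treatment of the case $K=\Q$ (where $\card S=1$ and Theorem~\ref{thm:main+Serre} does not apply, so one falls back on the classical order-$12$ computation) is a point the paper glosses over, and your bookkeeping of the two types of degree-$1$ primes over $2$ is accurate.
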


In Section~\ref{sec:statistics} we present some numerics concerning
the abelianized groups~$\SL$ for rings of integers of number fields.

For imaginary quadratic fields the situation can be very different.
Indeed, let~$\D$ equal the ring of integers in $\Q(\sqrt {-7})$.  The
abelianized group $\ab$ is isomorphic to $\Z\oplus\Z_4$ (see
~\cite[p.~162]{Cohn}), hence the group of linear characters equals
$\text{S}^1\times\Z_4$, where $\text{S}^1$ is the subgroup of complex
numbers of modulus~$1$. In particular, here the group $\SL$ possesses
infinitely many characters of finite order. This example shows that,
even for a Dedekind ring, the group $\ach$ can be smaller than the
torsion subgroup of the group of linear characters.

We do not know whether the corollary holds true in general for totally
complex number fields with at least $2$ complex places. We have many
examples where it holds true (see Section~\ref{sec:statistics}), but
do not know of any counterexample.

As already mentioned we did not find any reference for the
formulas~\eqref{eq:explicit-formulas}. Though one can verify them
easily by a case by case analysis of conjugacy classes in $G_N$ using
a character table (see e.g.~\cite{GAP4}), we decided to give a
deduction from scratch in Section~\ref{sec:SL2Z}. The proof of
Theorem~\ref{thm:main} is given in Section~\ref{sec:proof}.

\section{Proof of Theorem~\ref{thm:main}}
\label{sec:proof}

For an arbitrary ring $\D$, we use $\D^\ast$ for its group of
units. We set
\begin{equation*}
  T(a):=\mat 1a{}1\quad\text{($a$ in $\D$),}
  \quad
  E(a):=\mat a{}{}{a^{-1}}\quad\text{($a$ in $\D^\ast$),}
  \quad
  S = \mat {}{-1}1{}
  .
\end{equation*}
Recall that the matrices $T(a)$ and their transposes are called {\em
  elementary matrices}. Note also, that $\SL$ is generated by
elementary matrices if and only if it is generated by the matrices
$T(a)$ and $S$ (since $S=\mat 1{-1}{}1 \mat1{}11 \mat 1{-1}{}1$ and
$\mat 1{}a1 = ST(-a)S^{-1}$).

For a linear character $\chi$ of $\SL$, the map $a\mapsto
\chi\left(T(a)\right)$ defines a linear character $\xi$ of the
(additive) group $\D$.  The set of $a$ in $\D$ such that $\xi(ab)=1$
for all $b$ in $\D$ forms an $\D$-ideal $\id a$, which we call the
{\em annihilator of $\chi$}. For a nonzero ideal $\id l$, we say that
$\chi$ has {\em level $\id l$} if $\chi$ is trivial on the subgroup
$\Gamma(\id l)$ of all matrices in $\SL$ which are congruent
modulo~$\id l$ to the unit matrix. Clearly, if $\chi$ has level ${\id
  l}$ then ${\id l}$ contains the annihilator.

\begin{Lemma}
  \label{lem:annihilator}
  Let $\D$ be an arbitrary ring. Then the $\D$-ideal generated by the
  elements $u^2-1$, where $u$ runs through the group of units of $\D$,
  is contained in the annihilator of any linear character of $\SL$.
\end{Lemma}
\begin{proof}
  The lemma is an immediate consequence of the formula
  \begin{equation*}
    \label{eq:the-key}
    T(b{u}^2)=E(u)T(b)E(1/u)
    ,
  \end{equation*}
  valid for all $b$ in $\D$ and all units $u$.
\end{proof}

\begin{Lemma}
  \label{lem:the-key}
  Assume that $\SL$ is generated by elementary matrices.  Let $K$
  denote the commutator subgroup of $\SL$.  Then the application
  \begin{equation}
    \label{eq:the-key-map}
    \D \rightarrow \ab[{\SL}]:=\SL/K
    \quad
    a\mapsto T(a)K
  \end{equation}
  defines a surjective group homomorphism.
\end{Lemma}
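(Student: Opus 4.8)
The plan is to check that $a \mapsto T(a)K$ is both additive and surjective, exploiting that $\SL$ is generated by the matrices $T(a)$ and $S$ modulo the relation $\mat 1{}a1 = ST(-a)S^{-1}$ noted just before Lemma~\ref{lem:annihilator}. For additivity, observe that $T(a)T(b) = T(a+b)$ already in $\SL$, so the map $a \mapsto T(a)$ is a homomorphism $\D \to \SL$ of the additive group $\D$; composing with the quotient map $\SL \to \ab[{\SL}]$ gives a homomorphism, and no passage to $K$ is even needed for this part. So the only real content is surjectivity.

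For surjectivity, first I would note that the abelianization $\ab[{\SL}]$ is generated by the images of a generating set of $\SL$. By the hypothesis that $\SL$ is generated by elementary matrices, together with the remark $S = \mat 1{-1}{}1 \mat 1{}11 \mat 1{-1}{}1$, the group $\SL$ is generated by the matrices $T(a)$, $a \in \D$, and $S$; and since $S$ is itself a product of matrices of the form $\mat1{}a1 = ST(-a)S^{-1}$ once we invert — more carefully, $S$ is a word in the $T(a)$'s and their transposes $\mat1{}a1$, and each transpose is conjugate to some $T(-a)$ — every generator of $\SL$ becomes, after conjugation, a product of $T$'s. The key point: in $\ab[{\SL}]$, a conjugate $g h g^{-1}$ has the same image as $h$. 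Hence the image of $\mat1{}a1 = ST(-a)S^{-1}$ in $\ab[{\SL}]$ equals the image of $T(-a)$, which lies in the image of the map \eqref{eq:the-key-map}. Therefore the image of $S$ in $\ab[{\SL}]$ lies in the subgroup generated by the images of the $T(a)$, and consequently so does the image of every generator of $\SL$. Thus the map \eqref{eq:the-key-map} is surjective.

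The main obstacle is a small bookkeeping subtlety rather than a conceptual one: one must argue that $S$, and not just $\mat1{}a1$, has image inside the subgroup generated by the $T(a)$'s modulo $K$. This follows because $S = \mat 1{-1}{}1 \mat1{}11 \mat 1{-1}{}1 = T(-1)\,\mat1{}11\,T(-1)$, and $\mat1{}11 = ST(-1)S^{-1}$, so modulo $K$ we get $S \equiv T(-1)\,T(-1)\,T(-1) = T(-3)$; alternatively, and more robustly, one simply invokes that $\SL$ is generated by the $T(a)$ and $S$, writes any element as a word in these, and checks that modulo $K$ every letter $S^{\pm1}$ can be absorbed using the conjugation relation above. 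Once this is in place, the surjectivity is immediate, and combined with the already-established additivity this gives the desired surjective homomorphism.
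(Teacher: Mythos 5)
Your proof is correct and takes essentially the same route as the paper: both reduce surjectivity to showing that the class of $S$ in $\ab[{\SL}]$ is a power of $T(1)K$, namely $SK=T(-3)K$, after which generation by the $T(a)K$ is immediate. The paper obtains this from the single relation $\big(ST(1)\big)^3=S^2$, while you obtain it from $S=T(-1)\mat1{}11T(-1)$ together with $\mat1{}a1=ST(-a)S^{-1}$ and the triviality of conjugation in the abelianization --- an equally valid computation that uses only the identities already recorded before Lemma~\ref{lem:annihilator}.
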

\begin{proof}
  The lemma follows from the fact that $\big(ST(1)\big)^3=S^2$, so
  that $SK=T(1)^{-3}K$, from which we recognize that $\ab$ is
  generated by the elements $T(a)K$.
\end{proof}

\begin{Lemma}
  \label{lem:unit-reduction}
  Let $n\ge1$ and let $\id p^n$ be a prime ideal power in the Dedekind
  domain $\D$.  Suppose that $u^2=1$ for every $u$ in $\left(\D/\id
    p^n\right)^\ast$.  Then one of the following statements holds
  true:
  \begin{enumerate}
  \item $n=1$ and $\D/\id p^n = \F_2$ or $\D/\id p^n = \F_3$.
  \item $n=2$ or $n=3$, $\id p$ divides $2$ but $\id p^2$ does not,
    and $\D/\id p^n=\Z/2^n\Z$.
  \item $n=2$, $\id p^2$ divides $2$, and $\D/\id p^n=\F_2[t]/(t^2)$.
  \end{enumerate}
\end{Lemma}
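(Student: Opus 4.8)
The plan is to convert the hypothesis into a statement about the structure of the finite local ring $R:=\D/\id p^n$ and then to determine all possibilities for $R$ explicitly. Let $\ell:=|\D/\id p|$ be the size of the residue field and $\id m:=\id p R$ the maximal ideal of $R$. Since $\D$ is Dedekind, the ideals of $R$ are exactly the powers $R=\id m^0\supsetneq\id m^1\supsetneq\dots\supsetneq\id m^n=0$, each $\id m^i/\id m^{i+1}$ with $0\le i<n$ has $\ell$ elements (so $|R|=\ell^n$ and $|1+\id m|=\ell^{\,n-1}$), and $\id m$ is principal; I would cite these as routine, as they follow by localizing at $\id p$, where $\D$ becomes a discrete valuation ring. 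First I pin down $\ell$: the reduction $R^\ast\to(\D/\id p)^\ast$ is onto a cyclic group of order $\ell-1$, and $R^\ast$ has exponent dividing $2$ by hypothesis, so $\ell-1\mid 2$, i.e.\ $\ell\in\{2,3\}$; in particular $\id p$ contains $2$ or $3$. Moreover $1+\id m\le R^\ast$ has order $\ell^{\,n-1}$, and a group of exponent dividing $2$ has $2$-power order, so $\ell=3$ forces $n=1$, which is conclusion~(1) with $\D/\id p=\F_3$. From here on $\ell=2$, so $2\in\id p$ and $R^\ast=1+\id m$.

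Now fix a generator $\pi$ of $\id m$ and let $e_0$ be the integer with $2R=\id m^{e_0}$; then $1\le e_0\le n$, and in fact $e_0=\min\{e,n\}$, where $\id p^e$ is the exact power of $\id p$ dividing $2$. Applying the hypothesis to the unit $1+\pi$ yields the relation
\begin{equation*}
  2\pi+\pi^2=0 \qquad\text{in }R.
\end{equation*}
Suppose first $e_0\ge2$ (equivalently $n\ge2$ and $\id p^2\mid2$). Then $2\in\id m^2=\pi^2R$, so $2\pi+\pi^2=\pi^2v$ for some unit $v$; hence $\pi^2=0$, i.e.\ $\id m^2=0$, so $n\le2$, and since $n\ge e_0\ge2$ we get $n=2$. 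Then $2\in\id m^2=0$, so $R$ is a local $\F_2$-algebra with residue field $\F_2$ and $\id m^2=0$; choosing any nonzero $t\in\id m$ gives $R\isom\F_2[t]/(t^2)$, which is conclusion~(3).

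It remains to treat $e_0=1$, i.e.\ $\id m=2R$; note that so far the hypothesis has not been used in this case. From $R=\Z\cdot1+\id m$ and $\id m^j=(2R)^j=2^jR$ one gets, by induction on $j$, that $R=\Z\cdot1+\id m^j$ for every $j\ge1$; taking $j=n$ gives $R=\Z\cdot1$, so $\Z\to R$ is surjective and, comparing cardinalities, $R\isom\Z/2^n\Z$. Now the hypothesis is used: $(\Z/2^n\Z)^\ast$ is trivial for $n=1$, has order $2$ for $n=2$, and is isomorphic to $\Z/2\Z\times\Z/2^{\,n-2}\Z$ for $n\ge3$, so exponent dividing $2$ forces $n\le3$. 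This gives conclusion~(1) when $n=1$ and conclusion~(2) when $n\in\{2,3\}$ — where indeed $\id p\mid2$ but $\id p^2\nmid2$, since $e_0=1=\min\{e,n\}$ with $n\ge2$ forces $e=1$ — and the classification is complete.

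The structural facts about $R$ (that it is a chain ring with $|\id m^i/\id m^{i+1}|=\ell$, and that $2R=\id m^{e_0}$ with $e_0=\min\{e,n\}$) are routine consequences of the Dedekind hypothesis, and I would only sketch them. The single step with any content is the observation that $\id m=2R$ forces $R$ to be a quotient of $\Z$, which reduces the last case to the elementary structure of $(\Z/2^n\Z)^\ast$; everything else is bookkeeping. The main thing I expect to need care with is keeping the small values $n\in\{1,2,3\}$ straight and matching the three conclusions to precisely the right conditions on how $\id p$ sits over $2$.
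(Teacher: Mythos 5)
Your proof is correct and follows essentially the same route as the paper's: pin down the residue field as $\F_2$ or $\F_3$ via the exponent condition, split on whether $\id p^2$ divides $2$, use the square of $1+\pi$ to force $\id m^2=0$ in the ramified case, and identify the unramified case with $\Z/2^n\Z$ where the structure of its unit group bounds $n\le3$. You supply a bit more detail than the paper (e.g.\ the $R=\Z\cdot1+2^jR$ iteration showing $\D/\id p^n\isom\Z/2^n\Z$, which the paper simply asserts), but the underlying argument is the same.
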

\begin{proof}
  By assumption $U:=(\D/\id p)^\ast$ has exponent $\le2$. Since $\id
  p$ is maximal, $F:=\D/\id p$ is a field. Hence every finite subgroup
  of $U$ is cyclic. It follows that $U$ has order $1$, and hence
  $F=\F_2$, or else $U$ has order $2$, and hence $F=\F_3$.

  Suppose $n\ge 2$. Then $\D/\id p=\F_2$, since in the case of $\D/\id
  p=\F_3$ the group of units of $\D/\id p^2$ has order $6$. If $\id
  p^2$ does not divide $2$, then $\D/\id p^n=\Z/2^n\Z$, and since
  $\Z/16\Z$ has elementary divisors $2$ and~$4$, we conclude $n\le 3$.

  Suppose finally that $\id p^2$ divides $2$. Let $\alpha$ denote any
  element in $\id p$ which is not contained in $\id p^2$. Then
  $(1+\alpha)^2 = 1+2\alpha +\alpha^2 \not\equiv 1 \bmod \id p^3$,
  from which we conclude $n=2$. But $\D/\id p^2 = \F_2[\alpha+\id
  p^2]=\F_2[t]/(t^2)$.
\end{proof}

\begin{proof}[Proof of Theorem~\ref{thm:main}]
  For proving Theorem~\ref{thm:main} recall that $\D$ is now a
  Dedekind domain. Suppose that $\chi$ is a nontrivial linear
  character of $\SL$ whose kernel contains $\Gamma(\id l)$ for some
  nonzero ideal~$\id l$.  The homomorphism $\SL\rightarrow \SL[\D/\id
  l]$ induced by the canonical map $\D\rightarrow \D/\id l$ is
  surjective (\cite[cor.~5.2]{Bass}). We may therefore consider $\chi$
  as a character of $\SL[\D/\id l]$.  The canonical map
  \begin{equation*}
    \SL[\D/\id l]
    \rightarrow
    \prod_{\id p^l\Vert \id l} \SL[\D/\id p^l]
  \end{equation*}
  induced from the Chinese remainder theorem, where $\id p^l$ runs
  through the prime ideal powers dividing $\id l$, is an isomorphism
  of groups. Accordingly $\chi$ factors into a product of characters
  $\chi_{\id p}$, where $\chi_{\id p}$ has level~$\id p^l$. For the
  proof of Theorem~\ref{thm:main} we may therefore assume that~$\chi$
  is a nontrivial character of $\SL$ with level $\id l=\id p^l$ for
  some prime ideal ${\id p}$. It is clear that the annihilator~$\id a$
  of $\chi$ contains $\id l$, hence is of the form $\id p^n$. We claim
  that the group of units $(\D/\id a)^\ast$ has exponent $\le 2$.

  Indeed, let $u$ be a unit of $\D/\id a$.  The canonical map $x+\id l
  \mapsto x+\id a$ from $\D/\id l$ to $\D/\id a$ induces a surjective
  homomorphism of the group of units (since the group of units is
  formed by the residue classes which are relatively prime to $\id
  p$).  We therefore find a preimage $u'$ of $u$ in $\big(\D/\id
  l\big)^\ast$. But then, by Lemma~\ref{lem:annihilator}, $u'^2-1$ is
  contained in the annihilator of the character $\widetilde\chi(a+{\id
    l})= \chi(a)$ of $\SL[\D/\id l]$, which equals $\id a/\id l$. We
  conclude $u^2=1$.

  Since $\chi$ is nontrivial, the annihilator $\id a = \id p^n$ is
  different from $\D$, i.e.~we have $n\ge 1$.  Indeed, if the
  annihilator of $\chi$ equaled $\D$, the annihilator of
  $\widetilde\chi$ would be~$\D/\id l$. But Lemma~\ref{lem:the-key},
  applied to $\D/\id l$ instead of $\D$, would imply that
  $\widetilde\chi$ is trivial. For applying Lemma~\ref{lem:the-key} we
  need that $\SL[\D/\id l]$ is generated by elementary matrices, which
  is, in fact, well-known~\cite[Ch.~5, Cor.~9.3]{Bass-K-theory}.
  However, for a quotient $ \D/\id l$ of a Dedekind domain by a
  nonzero ideal $\id l$ this can more easily be proven directly as
  follows. By using the Chinese remainder theorem, we may assume that
  $\id l$ equals a prime ideal power~$\id p^l$. But then $\D/\id l$ is
  a local ring. Hence, if $A=\mat abcd$ is a matrix in $\SL[\D/\id
  l]$, then either $c$ is a unit (and then $A=T(a/c)ST(dc)E(c)$), or
  $a$ is unit (and then $A=ST(-c/a)ST(ba)E(-a)$).  For a unit $u$, we
  have $E(-u)=ST(1/u)ST(u)ST(1/u)$.

  We can therefore apply Lemma~\ref{lem:unit-reduction} to deduce that
  one of the three cases~(1) to~(3) apply to $\D/\id a=\D/\id
  p^n$. But this suffices to compute $\ab[{\SL[\D/\id p^l]}]$. Namely,
  since ${\SL[\D/\id p^l]}$ admits the nontrivial character
  $\widetilde \chi$, it possesses a nontrivial character whose
  annihilator is minimal. Without loss of generality we can assume
  that $\widetilde \chi$ assumes minimal annihilator. The
  map~\eqref{eq:the-key-map} of Lemma~\ref{lem:the-key} factors then
  through a surjection
  \begin{equation*}
    \D/\id p^n
    \rightarrow
    \ab[{\SL[\D/\id p^l]}]
    .
  \end{equation*}
  According to cases~(1) to ~(3) of Lemma~\ref{lem:unit-reduction} we
  deduce that $\ab[{\SL[\D/\id p^l]}]$ has order $\le 2$ or $\le 3$ in
  case (1), and has order $\le 4$ in case (2) and (3). In fact, the
  lemma implies only the bound $8$ in case (2) if $n=3$, but we know
  already from the structure of $\ab[{\SL[\Z]}]$ that $\SL[\D/\id
  p^3]\isom \SL[\Z/8\Z]$ admits exactly four characters. The existence
  of the characters $\varepsilon_\id p$, $\varepsilon_{\id p^2}$ and
  $\varepsilon_{\id p^2}'$ shows that we have indeed equality in all
  three cases, and that any linear character of $\SL[\D/\id p^l]$ is a
  power or product of these characters.

  That the subgroups occurring in~\eqref{eq:the-characters} form a
  direct product is easily seen by evaluating a product of elements of
  these groups on matrices which are congruent to the unit matrix
  modulo all the primes occurring in the products except for one, and
  using that, for $\id r^2|2$, we have $\SL[\D/\id
  r^2]\isom\SL[{\F_2[t]/(t^2)}]=\SL[\F_2]\ltimes \Gamma((t)/(t^2))$
  (see~\eqref{eq:the-irregular-decomposition}) and that
  $\varepsilon_{\id r}$ and $\varepsilon_{\id r^2}'$ vanish on one of
  these factors, respectively.  This proves Theorem~\ref{thm:main}.
\end{proof}

Let $\D$ be a field such that $\SL$ possesses a nontrivial character.
The annihilator of this character, not being equal to $\D$ since the
character is nontrivial, is the zero ideal.  Hence, by
Lemma~\ref{lem:annihilator}, $u^2=1$ for all nonzero elements in
$\D$. Since every subgroup of $\D$ is cyclic, we conclude that
$\D^\ast$ has order $1$ or $2$, i.e.~that $\D$ equals $\F_2$ or
$\F_3$.

\section{The linear characters of $\SL[\Z]$ and
  $\SL[{\F_2[t]/(t^2)}]$}
\label{sec:SL2Z}

In this section we shall prove the
formulas~\eqref{eq:explicit-formulas}
and~\eqref{eq:exceptionel-char}. Recall, first of all, that that
$\SL[\Z]$ is generated by $T=\mat11{}1$ and $S=\mat{}{-1}1{}$.
If~$\chi$ is a linear character of $\SL[\Z]$ which maps $T$ to, say
$\zeta$, then $\chi$ maps~$S$ to $\zeta^{-3}$ (since $(ST)^3=S^2$),
from which it follows that $\zeta^{12}=1$ (since $S^4=1$) and that
$\SL[\Z]$ possesses at most 12 linear characters.  In fact, since the
characters $\varepsilon_3$ and $\varepsilon_4$ define characters (as
we shall show in a moment) which have order $3$ and $4$, respectively,
we conclude that the abelianization of $\SL[\Z]$ has exact order $12$.

The formulas~\eqref{eq:explicit-formulas} result all from the
remarkable decompositions
\begin{equation}
  \label{eq:decomposition}
  G_N=\SL[\Z/N\Z]
  =
  \langle T\rangle \ltimes K_N
  \qquad
  (N=2,3,4)
  ,
\end{equation}
where $K_2$ and $K_3$ are the $3$-Sylow and $2$-Sylow subgroups of
$G_2$ and~$G_3$, respectively, and where $K_4$ is the subgroup of
$G_4$ generated by the elements of order $3$ in $G_4$. Mapping $T$ to
$-1$, $e^{2\pi i/3}$, $i$ for $N=2$, $3$, $4$, respectively defines
accordingly a character of $G_N$, which is in fact, as we shall see in
a moment, the character $\varepsilon_N$. Note that in each case $G_N$
does not possess any other character than powers of $\varepsilon_N$,
since, as we saw, the group $\SL[\Z]$ possesses only $12$
characters. In particular,~$K_N$ is the commutator subgroup of $G_N$.

Next, we prove for each $N$ the existence of the
decomposition~\eqref{eq:decomposition} and verify the claimed
formula~\eqref{eq:explicit-formulas} for $\varepsilon_N$.

The nontrivial linear character $\varepsilon_2$ of $G_2$ corresponds
to the signature map on permutations when we identify $G_2$ with the
symmetric group~$S_3$ via its natural action on the nonzero column
vectors in~$(\Z/2\Z)^3$. Thus $\varepsilon_2(A)=-1$ if $A$ corresponds
to a transposition, i.e.~if $A$ has order $2$, and
$\varepsilon_2(A)=+1$ otherwise. Note that $A$ has order~$2$ if and
only if its characteristic polynomial $x^2-tx+1$ equals $x^2-1$,
i.e.~if $t=\tr(A)=0$.  The formula~\eqref{eq:explicit-formulas} for
$\varepsilon_2$ is now obvious, as is the
decomposition~\eqref{eq:decomposition}, where $K_2$ corresponds to the
alternating subgroup under any isomorphism~$\SL[\F_2]\isom S_3$.

We note that $G_3$ has~24 elements and exactly one $2$-Sylow
subgroup~$K_3$, which is then normal.  In fact, $G_3$ has exactly $8$
elements whose order divides~$8$, which must then form the $2$-Sylow
subgroup of~$G_3$. The decomposition~\eqref{eq:decomposition} becomes
now evident.

Note that $K_3$ consists of those matrices $A$ which are either equal
to~$\pm1$ or whose trace $t$ equals~$0$. Namely, for any $A\not=\pm1$
of order dividing~$8$, its characteristic polynomial $x^2-tx+1$ must
divide the polynomial $x^8-1=(x + 1) (x - 1) (x^2 + 1) (x^2 + x - 1)
(x^2 - x - 1)$, hence equals $x^2+1$. Accordingly, we find, for any
$A=\mat *bc*$ in $G_3$, that $\varepsilon_3(A)=e^{2\pi i n/3}$, where
$n$ is chosen so that $T^{-n}A$ equals $\pm 1$ or has trace $t=0$,
i.e., where $n=-tb$ if $c=0$ or $n=tc$ otherwise. From this
formula~\eqref{eq:explicit-formulas} can be verified.

Finally, the group $G_4$ has $48$ elements. It has $8$ elements of
order~$3$. We show, first of all, that the subgroup $K_4$ generated by
these elements has order $12$. Note that this already implies the
decomposition~\eqref{eq:decomposition} since $K_4$ does then not
contain any power of $T$ except the unit matrix (since the normal
subgroup generated by $T$ or $T^2$ contains more than~$4$ elements),
and since $K_4$ is normal (by its very definition).  Clearly, $K_4$ is
contained in the inverse image $K_2'$ of the $3$-Sylow subgroup $K_2$
of~$G_2$ under the natural reduction map. But $K_2'=\langle G\rangle
\ltimes \Gamma(2)$, where $G$ is any fixed element of order $3$ (since
$K_2'$ has order $24$ and the kernel $\Gamma(2)$ of the reduction map
modulo $2$ has order $8$). The application $X\mapsto 1+2X$ defines an
isomorphism from the (additive) group $L$ of matrices over $\F_2$ with
trace~$0$ onto $\Gamma(2)$.  There is exactly one subgroup of $L$
which is invariant under conjugation with elements in~$G_2$, namely
the subgroup~$L_0$ of elements $X=\mat abc*$ in $M$ such that
$a+b+c=0$.  Accordingly, we obtain a linear character $s$ of $K_2'$ by
setting $s(A_0+2A_1)=1$ if~$A_0^{-1}A_1$ is in $L_0$, and setting
$s(A_0+2A_1)=-1$ otherwise. The kernel of $s$ contains all $8$
elements of order $3$ and has order $12$, hence coincides with the
subgroup generated by the elements of order~$3$.  We remark that $K_4$
is isomorphic to the alternating group $A_4$ (via the action of
conjugacy on its four $3$-Sylow subgroups).

Note that an element has order $3$ if and only if its trace~$t$
equals~$-1$ (since, for a matrix $A$ with trace $t$ and characteristic
polynomial $\chi_A=x^2-tx+1$, we have
$x^3-1=(x+t)\,\chi_A+(t+1)\big((t-1)x-1\big)$).  Thus, for any $A=\mat
abcd$ with trace $t$, we have $\varepsilon_4(A)=i^n$, where~$n$ is
chosen such that $T^{-n}A$ is in $K_4$.  We choose $n=(t+1)c$ if $c$
is odd (so that $T^{-n}A$ has trace $-1$), $n=b+2$ if $A=\mat **21$,
(so that $T^{-n}A=1+2\mat{}11{}$), $n=-b$ if $A=\mat **2{-1}$ (so that
$T^{-n}A=1+2\mat1{}11$), $n=b$ if if $A=\mat 1b{}1$ (so that
$T^{-n}A=1$), and $n=2-b$ if $A=\mat{-1}b{}{-1}$ (so that
$T^{-n}A=1+2\mat 11{}1$). We leave it to the reader to check
that~$i^n$ coincides in all cases with the right hand side of
formula~\eqref{eq:explicit-formulas}.

We remark that the quantities $\gs N$ occurring in
formula~\eqref{eq:explicit-formulas} depend only on the conjugacy
class of $A$. For this note that the application $A \mapsto \f =
cx^2+(d-a)xy-by^2$ maps conjugate matrices to $G_N$-equivalent
quadratic forms (more precisely, one has $f_{BAB^{-1}}(x,y) = \f
\big((x,y)B\big)$ for $B$ in $G_N$), and that $\gs N = I(f_A)$ for a
map $I$ which depends only on the $G_N$-equivalence classes of
forms. More specifically, $I(Q)=0$ if $Q=0$ or $N=4$ and $Q\equiv
0\bmod 2$, $I(Q)=1$ if $\sym{disc}(Q)\not=0$, and $I(Q)=Q(1,0)$ is a
unit and $I(Q)=-Q(0,1)$ otherwise.

Finally, we determine the linear characters of $\SL[R]$ where
$R=\F_2[t]/(t^2)$. Let $\alpha = t+(t^2)$.  Note, first of all, that
we have an isomorphism of groups $M\rightarrow \Gamma(\alpha)$,
$X\mapsto 1+\alpha X$, where $M$ is the additive subgroup of matrices
over $\F_2$ whose traces equal $0$. The group $M$ has eight
elements. Any matrix in $\SL[R]$ can be written in the form $A+\alpha
B$, where $A$ and~$B$ are matrices with entries in $\F_2$. The
application $A+\alpha B\mapsto A$ yields an exact sequence
\begin{equation*}
  1\rightarrow
  \Gamma(\alpha)
  \rightarrow \SL[R]
  \rightarrow \SL[\F_2]
  \rightarrow 1
  .
\end{equation*}
Since $\SL[\F_2]$ is a subgroup of $\SL[R]$ this sequence splits, and
we conclude that
\begin{equation}
  \label{eq:the-irregular-decomposition}
  \SL[R] = \SL[\F_2] \ltimes \Gamma(\alpha)
  .
\end{equation}
The group $\Gamma(\alpha)$ has $7$ nontrivial characters, of which
only one is invariant under conjugation with $\SL[\F_2]$, This is the
character given by $\varepsilon_4'(1+\alpha \mat abc*) =
(-1)^{a+b+c}$, which can then be continued to a character of $\SL[R]$
by setting $\varepsilon_4'(A+\alpha B) = \varepsilon_4'(1+\alpha
A^{-1}B)$. The other nontrivial character of $\SL[R]$ is
$\varepsilon_{(\alpha)}$.

\section{Statistics for number fields}
\label{sec:statistics}

We append three tables. The first one shows, for each pair of integers
$(n,r)$ ($2\le n\le 7$, $r$) the first number field~$K$ of degree $n$
and with exactly $r$ real embeddings for which $\SL [\Z_K]$ admit a
nontrivial linear character ($\Z_K$ denoting the ring of integers
of~$K$).  For generating these data we used the Bordeaux tables of
number fields~\cite{Bordeaux-tables}, which list for each pair $(n,r)$
the first few hundred thousands of number fields~$K$ having the given
signature ordered by the absolute value of their discriminant
$D_K$. The calculations were done using~\cite{sage}. According to
Corollary~\ref{cor:maximal-order-case} we had, to search for each
number fields in the Bordeaux tables for the existence of prime ideals
of degree~$1$ over $2$ or over $3$. For the totally complex fields of
degree $\ge3$, this would not suffice to make sure that there are no
additional non-congruence characters (for degree $n=2$ the first field
$\Q(\sqrt{-3})$ admits already $3$ linear characters). Here we
proceeded as follows. Once we found, for a given degree $n$, the first
field $K$ admitting a nontrivial congruence character, we checked
that, for each of the finitely many fields $L$ with $|D_L|<|D_K|$, the
ideal generated by $u^2-1$ ($u\in \Z_L^\ast$) is $1$. Since, for not
imaginary quadratic~$K$, the group $\SL[\Z_K]$ is generated by
elementary matrices~\cite{Vaserstein} and by the following proposition
we can deduce that $\SL[\Z_K]$ does not possess any nontrivial
character.
\begin{Proposition}
  Assume that the ring $\D$ is generated by elementary matrices. Then
  $\ab[{\SL[\D]}]$ is trivial if the $\D$-ideal~$\id b$ generated by
  $u^2-1$ ($u\in \D^\ast$) equals $\D$.
\end{Proposition}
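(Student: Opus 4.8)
The plan is to deduce everything from Lemmas~\ref{lem:annihilator} and~\ref{lem:the-key}. Since $\SL[\D]$ is by hypothesis generated by elementary matrices, Lemma~\ref{lem:the-key} applies and tells us that $a\mapsto T(a)K$, where $K$ is the commutator subgroup of $\SL[\D]$, is a \emph{surjective} homomorphism from the additive group of $\D$ onto $\ab[{\SL[\D]}]$. It therefore suffices to show that this homomorphism is trivial, i.e.~that its kernel $\id k$, which a priori is merely an additive subgroup of $\D$, equals $\D$.

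First I would feed the identity $T(bu^2)=E(u)T(b)E(1/u)$ from the proof of Lemma~\ref{lem:annihilator} into the abelianization. Conjugate elements have the same image in $\ab[{\SL[\D]}]$, so this identity yields $T(bu^2)K=T(b)K$ for every $b\in\D$ and every unit $u\in\D^\ast$; under the homomorphism $a\mapsto T(a)K$ this says precisely that $b(u^2-1)\in\id k$. Hence $\id k$ contains every $\D$-multiple of every element $u^2-1$ with $u\in\D^\ast$, and, being closed under addition, it therefore contains the $\D$-ideal generated by these elements — which is exactly the ideal $\id b$ of the statement. By hypothesis $\id b=\D$, so $\id k=\D$: the homomorphism $a\mapsto T(a)K$ is the zero map, and its image $\ab[{\SL[\D]}]$ is trivial.

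I expect no real obstacle here; the only point that needs a moment's care is that $\id k$ enters the argument only as an additive subgroup rather than as an ideal, but the computation above places all of $b(u^2-1)$ inside $\id k$, which is precisely what lets $\id k$ absorb the whole ideal $\id b$. As an alternative route one can phrase the argument through characters, staying closer to Section~\ref{sec:discussion}: by Lemma~\ref{lem:annihilator} the annihilator of any linear character $\chi$ of $\SL[\D]$ contains $\id b=\D$, so $\chi(T(a))=1$ for all $a\in\D$, and then $\chi$ is trivial by Lemma~\ref{lem:the-key}; since every nontrivial abelian group admits a nontrivial linear character, triviality of the character group of $\ab[{\SL[\D]}]$ forces $\ab[{\SL[\D]}]$ itself to be trivial. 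The first, direct argument is slightly cleaner in that it avoids this last appeal to Pontryagin duality.
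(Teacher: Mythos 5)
Your proof is correct and follows essentially the same route as the paper: invoke Lemma~\ref{lem:the-key} for the surjection $\D\rightarrow\ab[{\SL[\D]}]$ and then use the conjugation identity behind Lemma~\ref{lem:annihilator} to see that this surjection factors through $\D/\id b$, which is trivial by hypothesis. Your direct computation of the kernel just makes explicit the factoring step that the paper attributes to Lemma~\ref{lem:annihilator}.
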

\begin{proof}
  By Lemma~\ref{lem:the-key}, we have a surjective map $\D\rightarrow
  \ab[{\SL[\D]}]$, which, by Lemma~\ref{lem:annihilator}, factors
  through a surjective map
  \begin{equation}
    \label{eq:annihilator-map}
    \D/\id b\rightarrow \ab[{\SL[\D]}]
    .
  \end{equation}
  From this the proposition is obvious.
\end{proof}
Similarly, we computed the second table, which lists the first fields
$K$ of given degree~$n$ and number of real embeddings~$r$, for which
$\SL[\Z_K]$ admits the maximal possible number of linear
characters. For degree $\ge 4$ (with exception of $n=4$, $r=0$) we did
not find any such fields in the range of the Bordeaux tables.  For
$n=2$, $r=0$ we put a question mark since $\Q(\sqrt{-23})$ is the
first imaginary quadratic field such that $\SL[\Z_K]$ possesses
exactly 144 congruence characters, whereas for the order of the
abelianized group as well as for the groups $\ab[{\SL[\Z_L]}]$ for
arbitrary imaginary quadratic number fields $L$ we do not have any
information (except for some special fields $L$ which were treated in
the literature, see e.g.~\cite{Cohn}). For $n=4$, $r=0$ we computed
for all fields $L$ with $D_L<940033$ the ideals $\id b_L$ generated by
the elements $u^2-1$ ($u\in\Z_L^\ast$), and verified that $\id b_L$
decomposes into a product of pairwise different prime ideals over $3$
and squares of pairwise different prime ideals over $2$. Again from
the surjectivity of the map~\eqref{eq:annihilator-map} we can deduce
that $\SL[\Z_K]$ admits only congruence characters as linear
characters.

\begin{table}[ht]
  \label{tab:first-fields-with-chars}
  \caption{First number fields $K$ with nontrivial~$\ab[{\SL[\Z_K]}]$.}
  \footnotesize
  \begin{tabular}{|l|l|r|l|c|}
    \hline
    $n$&$r$&$D_K$&equation&$\#\ab[{\SL[\Z_K]}]$\\
    \hline
    \hline
    $2$&$0$&$-3$&$x^{2} - x + 1$&$3$\\
    $2$&$2$&$8$&$x^{2} - 2$&$4$\\
    $3$&$1$&$-31$&$x^{3} + x - 1$&$3$\\
    $3$&$3$&$81$&$x^{3} - 3x - 1$&$3$\\
    $4$&$0$&$189$&$x^{4} - x^{3} + 2x + 1$&$3$\\
    $4$&$2$&$-491$&$x^{4} - x^{3} - x^{2} + 3x - 1$&$3$\\
    $4$&$4$&$1957$&$x^{4} - 4x^{2} - x + 1$&$3$\\
    $5$&$1$&$3089$&$x^{5} - x^{3} + 2x - 1$&$3$\\
    $5$&$3$&$-9439$&$x^{5} - x^{4} - x^{3} + x^{2} - 2x + 1$&$3$\\
    $5$&$5$&$36497$&$x^{5} - 2x^{4} - 3x^{3} + 5x^{2} + x - 1$&$3$\\
    $6$&$0$&$-19683$&$x^{6} - x^{3} + 1$&$3$\\
    $6$&$2$&$63909$&$x^{6} + 2x^{4} - x - 1$&$3$\\
    $6$&$4$&$-233003$&$x^{6} - 3x^{4} - 3x^{3} + 4x^{2} + x - 1$&$3$\\
    $6$&$6$&$1259712$&$x^{6} - 6x^{4} + 9x^{2} - 3$&$3$\\
    $7$&$1$&$-435247$&$x^{7} - x^{6} - x^{5} + 3x^{4} - x^{3} - x^{2} + 2x + 1$&$3$\\
    $7$&$3$&$1602761$&$x^{7} - 2x^{6} + 2x^{5} + x^{4} - 3x^{3} + 5x^{2} - 4x + 1$&$3$\\
    $7$&$5$&$-6930439$&$x^{7} - x^{6} - 3x^{5} + 5x^{4} - 2x^{3} - 3x^{2} + 3x + 1$&$3$\\
    $7$&$7$&$25164057$&$x^{7} - 2x^{6} - 5x^{5} + 9x^{4} + 7x^{3} - 10x^{2} - 2x + 1$&$3$\\
    \hline
  \end{tabular}
\end{table}

\begin{table}[h]
  \caption{First number fields $K$ with $\#\ab[{\SL[\Z_K]}] = 12^{[K:\Q]}$}
  \footnotesize
  \begin{tabular}{|l|l|r|l|}
    \hline
    $n$&$r$&$D_K$&equation\\
    \hline
    \hline
    $2$&$0$&?\hfill$-23$&$x^{2} - x + 6$\\
    $2$&$2$&$73$&$x^{2} - x - 18$\\
    $3$&$1$&$-10079$&$x^{3} + 11x - 36$\\
    $3$&$3$&$49681$&$x^{3} - 37x - 12$\\
    $4$&$0$&$940033$&$x^{4} - x^{3} + 44x^{2} + 4x + 384$\\
    \hline
  \end{tabular}
  \label{tab:first-fields-with-max-chars}
\end{table}

\begin{table}[ht]
  \caption{First totally real abelian number fields $K$
    unramified outside a prime $l$
    with $\#\ab[{\SL[\Z_K]}] = 12^{[K:\Q]}$}
  \footnotesize
  \begin{tabular}{|r|l|}
    \hline
    $l$&equation\\
    \hline
    \hline
    $73$&$x^2 - x - 18$\\
    $307$&$x^3 - x^2 - 102 x + 216$\\
    $577$&$x^4 - x^3 - 216 x^2 + 36 x + 1296$\\
    $3221$&$x^5 - x^4 - 1288 x^3 - 17780 x^2 - 30432 x + 285696$\\
    $3889$&$x^6 - x^5 - 1620 x^4 + 360 x^3 + 174960 x^2 - 11664 x - 1259712$\\
    $5531$&$x^7 - x^6 - 2370 x^5 + 21108 x^4 + 746040 x^3 - 2927424 x^2$ \\
    &\hfill$- 46637056 x + 70303744$\\
    $6529$&$x^8 - x^7 - 2856 x^6 + 26830 x^5 + 2014493 x^4 - 23400945 x^3$\\
    &\hfill$- 374849822 x^2 + 2921535140 x + 29083370664$\\
    \hline
  \end{tabular}
  \label{tab:first-abelian-fields-with-max-chars}
\end{table}

Finally, in Table~\ref{tab:first-abelian-fields-with-max-chars} we
listed, for each degree $n\le 8$, the smallest prime $l$ such that the
$l$th cyclotomic field contains a totally real number field $K$ such
that $\SL[\Z_K]$ admits the theoretically maximal possible number of
linear characters. An equation for $K$ is given in the second column,
respectively. It is straightforward to show that as primitive element
of these fields one may take $\gamma_l=\sum_{x\bmod l} e^{2\pi i
  x^n/l}$. However, the corresponding minimal polynomial has quite
huge coefficients, so we applied to it in Sage the Pari/GP function
{\tt gp\big('polred(\dots)'\big)} to find an equation with smaller
coefficients.

In~\cite{cn-tables} the interested reader can find the results of all
our computations, in particular the order of $\ab[{\SL[\Z_K]}]$ for
all (not totally complex) number fields in the range of the Bordeaux
tables.

\bibliography{linear-characters-of-SL2O} \bibliographystyle{alpha}

\end{document}